

\documentclass[12pt]{amsart}
\usepackage{fullpage}
\usepackage{amssymb,latexsym}
\usepackage{enumerate}
\usepackage{hyperref}

\makeatletter \@namedef{subjclassname@2010}{%
  \textup{2010} Mathematics Subject Classification}
\makeatother

\newcounter{thm} 
\newtheorem{Theorem}[thm]{Theorem}

\newtheorem{Proposition}[thm]{Proposition}
\newtheorem{Lemma}[thm]{Lemma}

\newtheorem*{Problem}{Problem}



\newcommand{\FF}[0]{\mathbb F}

	\newcommand{\ZZ}[0]{\mathbb Z}


	\newcommand{\cR}[0]{\mathcal R}


\renewcommand{\aa}[0]{\textbf{\textit{a}}}

\newcommand{\xx}[0]{\textbf{\textit{x}}}

\newcommand{\zz}[0]{\textbf{\textit{z}}}

\newcommand{\eps}[0]{\varepsilon}

\renewcommand{\pmod}[1]{\text{ (mod }#1)}

\newcommand{\lr}[1]{\left(#1\right)}

\renewcommand{\phi}{\varphi}


\begin{document}


\baselineskip=17pt



\title{The Density of Numbers Represented by Diagonal Forms of Large Degree}

\author[Brandon Hanson]{Brandon Hanson} \address{Pennsylvania State University\\
University Park, PA}
\email{bwh5339@psu.edu}

\author[Asif Zaman]{Asif Zaman} \address{University of Toronto\\
Toronto, ON}
\email{asif@math.toronto.edu}
\maketitle

\begin{abstract}
	Let $s \geq 3$ be a fixed positive integer and $a_1,\dots,a_s \in \ZZ$ be arbitrary. We show that, on average over $k$, the density of numbers represented by the degree $k$ diagonal form
	\[
	a_1 x_1^k + \cdots + a_s x_s^k
	\]
	decays rapidly with respect to $k$. 	
\end{abstract}

\section{Introduction}
The classical version of Waring's problem asks whether every positive integer can be written as a sum of at most $s$ positive integers, each of which is a $k$'th power. In other words, is there an integer $s$ (which depends on $k$) such that for each $n\geq 1$ we have a solution to the equation
\begin{equation}
n=x_1^k+\cdots +x_s^k
\label{eqn:Waring}	
\end{equation}
in non-negative integers $x_1,\ldots,x_s$? The least value of $s$ which is admissible is usually referred to as $g(k)$, and Waring's problem is thus the assertion that $g(k)<\infty$ for any $k\geq 1$. Waring's problem has a long history; for a nice exposition see \cite{VW}.

The ``easier'' version of Waring's problem, a name attributed to Wright \cite{wright1934}, asks whether there is a solution to the equation
\begin{equation}
n= x_1^k\pm \cdots \pm x_s^k.
\label{eqn:Waring_easy}	
\end{equation}
The least $s$ for which this equation is soluble for each $n$ is usually referred to as $v(k)$, and establishing that $v(k)<\infty$ is a fairly simple argument, which can be found in \cite{N}. Clearly any upper bound for $g(k)$ in the usual Waring problem extends to a bound for $v(k)$ as well. However, the freedom to use negative summands may make $v(k)$ considerably smaller.

One can verify that $g(k)\geq 2^k-1$. Indeed, in order for $2^k-1$ to be written as a sum of $k$'th powers, we only have $1$'s at our disposal. For these reasons, one usually considers instead $G(k)$, which is the least $s$ such that \eqref{eqn:Waring} is soluble for all $n$ sufficiently large. Here, the bound $G(k)\geq k$ is still quite simple. To represent each $n$ in the range $X\leq n\leq 2X$, the variables $x_i$ can be no larger than $X^{1/k}$. Thus the vector $(x_1,\ldots,x_s)$ is a lattice point in the box $[0,X^{1/k}]^s$, and there are at most $O(X^{s/k})$ such lattice points. To represent all integers in the desired range, we must therefore have $s\geq k$. The introduction of negative summands causes this argument to fail completely, because one is no longer counting lattice points in a bounded region. This motivates the following question, which was asked in \cite{BB}:

\begin{Problem}
For $k$ sufficiently large, is it true that the set of integers of the form
\[n=x_1^k\pm x_2^k\pm  \cdots \pm x_s^k\]
has asymptotic density zero?
\end{Problem}

A result of Wooley (see \cite{BB} and \cite{W}) asserts that, for $k\gg s^3$, the set of integers of the form
\[n=x_1^k\pm \cdots \pm x_s^k\] has density zero and in fact more is true -- one can obtain fairly good decay rates in the proportion of integers up to $X$ which can be represented. However, Wooley's result is conditional on a generalized version of the $abc$-conjecture and, as far as the authors are aware, there seems to be little known unconditionally for large values of $s$, say $s \geq 5$. We prove a result in this direction which is much weaker, but unconditional. We will not be able to prove that the set of integers represented has zero density, but we will establish bounds on the asymptotic density of these integers. These bounds will, on average, decay quite rapidly with respect to $k$. 

In fact, we will establish something a bit more general in that we will allow for arbitrary integer coefficients, not just $1$'s and $-1$'s. Let $s\geq 2$ be fixed and let $\aa=(a_1,\ldots,a_s)\in\ZZ^s$ be arbitrary. We consider the form 
\begin{equation}
F_{\aa,k}(\xx)=a_1x_1^k+\cdots+a_sx_s^k
\label{eqn:form}	
\end{equation}
and the set
\[\cR(\aa,k)=\{n:n=F_{\aa,k}(\xx)\text{ for some }\xx\in\ZZ^s\}\] of numbers which this form represents. 

We shall estimate the average asymptotic (upper)-density 
\begin{equation}
\delta_k=\limsup_{N\to \infty} \frac{|\cR(\aa,k)\cap [1,N]|}{N}
\label{def:delta_k}	
\end{equation}
as a function of $k$. This number implicitly depends on $\aa$, but the results we shall prove about $\delta_k$ are uniform over $\aa$. For $s \geq 3$, the following theorem establishes that the value of $\log(1/\delta_k)$ is large on average. 

\begin{Theorem} \label{MainTheorem} Let $s \geq 3$ be fixed and $\aa \in \ZZ^s$ be arbitrary. Let $X \geq 3$ be sufficiently large depending at most on $s$ and define $\delta_k$ as in \eqref{def:delta_k}. Then
\begin{equation}
\frac{1}{X}\sum_{1 \leq k < X} \log(1/\delta_k) \gg \frac{X^{\frac{1}{s-1}}}{\log X}.
\label{eqn:THM1}
\end{equation}
\end{Theorem}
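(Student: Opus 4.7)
The plan is to exploit local obstructions at primes $p \equiv 1 \pmod{k}$, combine them via the Chinese Remainder Theorem, and average over $k$. If $p$ is prime with $p \equiv 1 \pmod{k}$, then $x \mapsto x^k$ on $\FF_p$ has image of size $m_p + 1$ where $m_p = (p-1)/k$, since the $k$-th powers form a subgroup of $\FF_p^\times$ of index $k$. Consequently the image of $F_{\aa,k}$ in $\ZZ/p\ZZ$, being contained in an $s$-fold sumset of affine copies of this set, has at most $(m_p+1)^s$ elements. The bound is uniform in $\aa$: when $p \mid a_j$ for some $j$ the $j$-th variable drops out and the image only shrinks. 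The Chinese Remainder Theorem therefore gives, for any finite set $\mathcal{P}_k$ of distinct primes $\equiv 1 \pmod{k}$,
\[
\delta_k \leq \prod_{p \in \mathcal{P}_k} \frac{(m_p+1)^s}{p}, \quad \text{so} \quad \log(1/\delta_k) \geq \sum_{p \in \mathcal{P}_k} \bigl(\log p - s\log(m_p + 1)\bigr).
\]

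Set $M = X^{1/(s-1)}$. For each $k<X$ I would take $\mathcal{P}_k$ to consist of primes $p = mk+1$ with $1 \leq m \leq k^{1/(s-1)}$, which is the natural threshold for the summand to be asymptotically positive. Summing over $k$ and swapping the order of summation gives
\[
\sum_{k<X} \log(1/\delta_k) \geq \sum_{m=1}^{M} \sum_{\substack{p \equiv 1 \pmod{m} \\ m^s < p \leq mX}} \bigl(\log p - s\log(m+1)\bigr).
\]
Since $s \geq 3$ ensures $m \leq X^{1/2}$, Bombieri--Vinogradov (or Siegel--Walfisz on average) yields the asymptotic $\sum_{p \leq Y,\, p \equiv 1 \pmod m} \log p \sim Y/\varphi(m)$ together with a corresponding prime count. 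Using $\varphi(m) \leq m$ and substituting $t = \log m/\log M$, the inner sum at each $m$ is bounded below by $X \cdot (s-1)(1-t)/((s-1)+t)$ up to lower-order terms and a subtraction coming from the truncation $p > m^s$.

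Finally, approximating the outer sum by an integral and changing variables via $m = M^t$, we arrive at
\[
\sum_{k<X} \log(1/\delta_k) \;\gtrsim\; X \log M \int_0^1 \frac{(s-1)(1-t)}{(s-1)+t} M^t \, dt \;\sim\; \frac{XM}{\log M} \;\sim\; \frac{X^{1+1/(s-1)}}{\log X},
\]
where the integral is evaluated by a Laplace-type argument concentrating near $t = 1 - 1/\log M$ (where the decaying bracket and the growing factor $M^t$ balance). Dividing by $X$ yields the theorem. The main obstacle is the technical bookkeeping in the prime-counting step: obtaining uniform lower bounds for primes in arithmetic progressions with moduli up to $X^{1/(s-1)}$, and controlling the correction from primes $p \leq m^s$ that becomes non-negligible at the endpoint $m \asymp M$. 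The hypothesis $s \geq 3$ is precisely what keeps the relevant moduli inside Bombieri--Vinogradov's level of distribution, making this feasible; the $1/\log X$ loss emerges naturally from the integral's concentration at the boundary where the positivity margin becomes thin.
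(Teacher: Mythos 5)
Your proposal is correct and follows essentially the same route as the paper: local densities at primes $p \equiv 1 \pmod{k}$ combined via CRT, the reindexing $(k,p) \mapsto (m,p)$ with $m = (p-1)/k$, Bombieri--Vinogradov for moduli up to $X^{1/(s-1)} \le X^{1/2}$, and a $1/\log X$ loss coming from the thin positivity margin of $\log p - s\log(m+1)$ near $m \asymp X^{1/(s-1)}$. The only cosmetic difference is that the paper handles the endpoint by truncating at $Y = X^{1/(s-1+\eta)}$ with $\eta \asymp 1/\log X$ and using a uniform weight bound, rather than your Laplace-type evaluation of the integral, but these capture the same phenomenon.
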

We will use the convention that $\log(1/\delta_k) = \infty$ when $\delta_k = 0$. Thus, we expect that the quantity on the lefthand side in \eqref{eqn:THM1} is infinite for all $X$ sufficiently large depending only on $s$ and $\aa$. Perhaps it is instructive to compare Theorem \ref{MainTheorem} to a conditional result. Let $\pi(X;k,1)$ denote the number of primes $p < X$ satisfying $p \equiv 1 \pmod{k}$ and let $\varphi(k)$ denote Euler's totient function. 
\begin{Proposition} \label{MainProp}
Let $s \geq 3$ be fixed and $\aa \in \ZZ^s$ be arbitrary. Let $k \geq 3$ be sufficiently large depending at most on $s$ and define $\delta_k$ as in \eqref{def:delta_k}. If
\begin{equation}
\pi(X;k,1) = \frac{1}{\varphi(k)} \mathrm{Li}(X) + O_{\eps}\Big( \frac{X^{1/2+\eps}}{k^{1/2}} \Big)
\label{eqn:PrimesinAPs_conjecture}
\end{equation}
for any $\eps > 0$ and $X \geq k^{1+\eps}$ then 
\begin{equation}
\log(1/\delta_k) \gg \frac{k^{\frac{1}{s-1}}}{\log k}.
\label{eqn:PrimesinAPs_conjecture_result}
\end{equation}
\end{Proposition}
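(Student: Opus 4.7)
The plan is to use local obstructions at primes $p \equiv 1 \pmod{k}$, exploiting the fact that there are few $k$-th power residues modulo such primes. For any prime $p \equiv 1 \pmod{k}$, the $k$-th powers in $\ZZ/p\ZZ$ (including $0$) form a set of size $(p-1)/k + 1$, so the image of $F_{\aa,k}$ modulo $p$ has cardinality at most $((p-1)/k + 1)^s$. Given a finite set $\cP$ of distinct primes $\equiv 1 \pmod{k}$, the Chinese Remainder Theorem gives
\[
\delta_k \leq \prod_{p \in \cP} \frac{((p-1)/k + 1)^s}{p}.
\]
Taking logarithms and using $\log((p-1)/k + 1) \leq \log(p/k) + k/p$,
\[
\log(1/\delta_k) \geq s\log k \cdot |\cP| - (s-1)\sum_{p \in \cP}\log p - s\sum_{p \in \cP}\frac{k}{p}.
\]

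Since $((p-1)/k + 1)^s < p$ is nontrivial only below the threshold $p \asymp k^{s/(s-1)}$, I would set $Y = k^{s/(s-1)}/e$ and let $\cP$ be all primes $p \leq Y$ with $p \equiv 1 \pmod{k}$. Because $s \geq 3$ implies $Y \geq k^{3/2}/e$, the hypothesis \eqref{eqn:PrimesinAPs_conjecture} applies. Partial summation from \eqref{eqn:PrimesinAPs_conjecture} yields
\[
\sum_{p \in \cP}\log p = \frac{Y}{\varphi(k)} + O_\eps\Big(\frac{Y^{1/2+\eps}\log Y}{k^{1/2}}\Big),
\]
and a similar argument bounds $\sum_{p \in \cP} k/p$ by $O((\log\log k)^2)$, which is negligible. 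Combined with $\mathrm{Li}(Y) = Y/\log Y + O(Y/\log^2 Y)$, the main term reduces to
\[
\frac{Y}{\varphi(k)}\left(\frac{s\log k}{\log Y} - (s-1)\right) + O\Big(\frac{Y}{\varphi(k)\log Y}\Big).
\]
With $\log Y = \tfrac{s\log k}{s-1} - 1$, Taylor expansion gives $\tfrac{s\log k}{\log Y} - (s-1) \asymp \tfrac{1}{\log k}$, while $Y/\varphi(k) \geq Y/k = k^{1/(s-1)}/e$. Hence the main term is $\gg k^{1/(s-1)}/\log k$, and the arithmetic error $O_\eps(k^{1/(2(s-1)) + O(\eps)}\log k)$ is much smaller.

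The crucial subtlety is that the leading-order contributions $s\log k \cdot \pi(Y;k,1)$ and $(s-1)\theta(Y;k,1)$ cancel exactly at the critical scale $Y \asymp k^{s/(s-1)}$, so one must extract the next-order term, which is of size $1/\log k$. This forces $Y$ to sit only a constant factor below the threshold: any polylog reduction of $Y$ would shrink $Y/\varphi(k)$ too quickly to preserve the bound. The assumed strength of \eqref{eqn:PrimesinAPs_conjecture} is precisely what controls both $\pi(Y;k,1)$ and $\theta(Y;k,1)$ to the required accuracy at this choice of $Y$; without it, the delicate cancellation on the main term could not be isolated from the arithmetic error.
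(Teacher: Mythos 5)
Your overall strategy is the same as the paper's: restrict to primes $p \equiv 1 \pmod{k}$ below the threshold $p \asymp k^{s/(s-1)}$ where the local density $\lr{\frac{p-1}{k}+1}^s/p$ drops below $1$, multiply the local densities via the Chinese Remainder Theorem, and count these primes using hypothesis \eqref{eqn:PrimesinAPs_conjecture}. The difference is how close to the threshold you work. You take $Y = k^{s/(s-1)}/e$, so that the leading contributions $s\log k\cdot\pi(Y;k,1)$ and $(s-1)\sum_{p\in\cP}\log p$ cancel to first order and the bound must be extracted from the second-order behaviour of $\mathrm{Li}$. This can be made to work, but your write-up has a real problem at exactly that point: in the display
\[
\frac{Y}{\phi(k)}\lr{\frac{s\log k}{\log Y} - (s-1)} + O\lr{\frac{Y}{\phi(k)\log Y}},
\]
the main term equals $\frac{Y}{\phi(k)}\cdot\frac{s-1}{\log Y}$, which is of the \emph{same order} as the error $O\lr{\frac{Y}{\phi(k)\log Y}}$ you are carrying: replacing $\mathrm{Li}(Y)$ by $Y/\log Y$ costs $s\log k\cdot O\lr{Y/(\phi(k)\log^2 Y)} = O\lr{Y/(\phi(k)\log Y)}$ since $\log Y \asymp \log k$. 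As written, the conclusion ``the main term is $\gg k^{1/(s-1)}/\log k$'' does not follow, because the unspecified implied constant in that error could swamp it. The fix is one line: this error has a favorable sign, since $\mathrm{Li}(Y) \geq Y/\log Y$ and it enters with the positive coefficient $s\log k$, so discarding it only weakens the lower bound. With that observation your argument closes, as the remaining errors ($O_\eps(k^{1/(2(s-1))+O(\eps)}\log k)$ from the hypothesis and $O((\log\log k)^2)$ from $\sum_p k/p$) genuinely are negligible.

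The paper avoids this delicacy entirely, and your closing claim that $Y$ is \emph{forced} to sit only a bounded factor below the threshold so that a second-order cancellation must be isolated is not accurate. The paper sets $Z = \frac{1}{R}k^{1+1/(s-1)}$ with $R$ a sufficiently large constant depending only on $s$; then every prime $p < Z$ with $p \equiv 1 \pmod{k}$ contributes $\log p - s\log\lr{\frac{p-1}{k}+1} \geq (s-1)\log R + O_s(1) \gg 1$ to $\log(1/\delta_k)$, whence $\log(1/\delta_k) \gg \pi(Z;k,1) \gg Z/(\phi(k)\log Z) \gg k^{1/(s-1)}/\log k$. There the hypothesis is used only for the leading term of the prime count, no sign bookkeeping is needed, and the cost is merely a constant factor (depending on $s$) in the final bound, which is not being optimized in \eqref{eqn:PrimesinAPs_conjecture_result}. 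I recommend adopting that simpler normalization, or else inserting the sign observation above.
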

 Assumption \eqref{eqn:PrimesinAPs_conjecture} is one of the strongest widely-believed conjectures regarding the distribution of primes in arithmetic progressions and Theorem \ref{MainTheorem} unconditionally obtains the corresponding average result for $\log(1/\delta_k)$.   
Note that the special case $s=2$ is addressed by classical work of Mahler \cite{Mahler1933} which implies $\delta_k = 0$ for $k \geq 3$. For further details on the case $s=2$, see for example \cite{BDW1998} and \cite{stewart-xiao2016}. 

\section{Local densities and a conditional result}
 The method of proof for Theorem \ref{MainTheorem} and Proposition \ref{MainProp} is to bound the density $\delta_k$ by considering local constraints. For instance, a very simple first observation is that for $p$ prime
\[\delta_{p-1}\leq \frac{2^s}{p}.\] This is just the trivial observation that the set of $(p-1)^{\text{th}}$ powers modulo $p$ consist of the residue classes $0$ and $1$ modulo $p$. Thus there are at most $2^s$ admissible values of the diagonal form $F_{\aa,k}$ modulo $p$. Our aim is then to improve this estimate for a given $k$, and subsequently obtain good density estimates for the average exponent $k$. To this end, define
\[\delta_k(p)=\frac{1}{p}\left|\{F_{\aa,k}(\zz)\mod p:\zz\in\FF_p^s\}\right|.\]
The Chinese Remainder Theorem then gives:
\begin{Lemma}
\label{lem:CRT}
For any integer $k \geq 1$,
\[\delta_k\leq \prod_{p}\delta_k(p).\]
\end{Lemma}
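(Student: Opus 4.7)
The plan is to exhibit, for each finite set of primes $\cP$, a superset of $\cR(\aa,k)$ whose natural density equals $\prod_{p\in\cP}\delta_k(p)$, and then let $\cP$ exhaust all primes.

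First I would record the trivial but crucial containment. Writing $S_k(p) = \{F_{\aa,k}(\zz) \bmod p : \zz \in \FF_p^s\}$, so that $|S_k(p)| = p\,\delta_k(p)$, any representation $n = F_{\aa,k}(\xx)$ reduces modulo $p$ to $F_{\aa,k}(\xx \bmod p) \in S_k(p)$. Hence
\[
\cR(\aa,k) \subseteq \bigcap_{p \text{ prime}} \{ n \in \ZZ : n \bmod p \in S_k(p)\}.
\]

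Next, for a finite set of primes $\cP = \{p_1,\dots,p_r\}$ with $M = \prod_{i=1}^r p_i$, I would apply the Chinese Remainder Theorem to the set
\[
A_\cP := \{ n \in \ZZ : n \bmod p_i \in S_k(p_i) \text{ for all } 1\leq i\leq r\}.
\]
CRT identifies $\ZZ/M\ZZ$ with $\prod_i \ZZ/p_i\ZZ$, so $A_\cP$ is a union of exactly $\prod_i |S_k(p_i)| = M \prod_{i} \delta_k(p_i)$ residue classes modulo $M$. Therefore $A_\cP$ has natural density exactly $\prod_{p \in \cP} \delta_k(p)$. Since $\cR(\aa,k) \subseteq A_\cP$ from the previous step, the upper density satisfies
\[
\delta_k \leq \prod_{p \in \cP} \delta_k(p).
\]

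Finally, since each factor $\delta_k(p) \leq 1$, the partial products over finite $\cP$ are monotone non-increasing as $\cP$ grows, and their infimum is the formal infinite product $\prod_p \delta_k(p)$. Taking the infimum of the inequality above over all finite $\cP$ yields the claim. There is no real obstacle here: the argument is a clean application of CRT combined with the fact that representability modulo $p$ is a necessary condition for representability over $\ZZ$.
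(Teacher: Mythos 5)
Your argument is correct and is precisely the CRT argument the paper has in mind (the paper states the lemma without proof, simply attributing it to the Chinese Remainder Theorem). The reduction to finitely many primes, the density count of residue classes modulo $M=\prod_{p\in\cP}p$, and the passage to the infimum over finite $\cP$ are all sound.
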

We will combine this with a simple development of the idea we used to bound $\delta_{p-1}$. Let $(m,n)$ denote the greatest common divisor of two integers $m$ and $n$. 
\begin{Lemma}
\label{lem:Local_Density}
Let $k\geq 2$ and let $p$ be a prime. Then
\[\delta_k(p) \leq \alpha_{k,p}\] where \[\alpha_{k,p}=\frac{1}{p} \lr{\frac{p-1}{(k,p-1)} + 1}^s.\]
\end{Lemma}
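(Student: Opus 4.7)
The plan is to bound the image of $F_{\aa,k}$ in $\FF_p$ by controlling, coordinate by coordinate, the image of the $k$-th power map and then taking a product.

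First I would determine the size of the set of $k$-th powers modulo $p$. The map $z \mapsto z^k$ on $\FF_p^\times$ is a group homomorphism whose kernel has order $(k,p-1)$ (the subgroup of $(k,p-1)$-th roots of unity in the cyclic group $\FF_p^\times$). Consequently its image has cardinality $(p-1)/(k,p-1)$, and the set $K_{k,p} := \{z^k : z \in \FF_p\}$ of all $k$-th powers in $\FF_p$ (including $0$) has cardinality exactly $\frac{p-1}{(k,p-1)} + 1$.

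Next I would translate this into a bound on each summand $a_i z_i^k$. For each $i$, the set $\{a_i z_i^k : z_i \in \FF_p\}$ equals $a_i \cdot K_{k,p}$, and so has at most $\frac{p-1}{(k,p-1)} + 1$ elements (with equality when $a_i \not\equiv 0 \pmod p$, and strict inequality, namely $1$, when $a_i \equiv 0 \pmod p$; in either case the claimed bound holds).

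Finally, since
\[
\{F_{\aa,k}(\zz) \bmod p : \zz \in \FF_p^s\} \subseteq \Big\{ \sum_{i=1}^s y_i : y_i \in a_i \cdot K_{k,p} \Big\},
\]
the number of residues attained by $F_{\aa,k}$ modulo $p$ is at most $\prod_{i=1}^s |a_i \cdot K_{k,p}| \leq \left(\frac{p-1}{(k,p-1)} + 1\right)^s$. Dividing by $p$ yields $\delta_k(p) \leq \alpha_{k,p}$, as required. There is no real obstacle here: the argument is an elementary counting bound, and the only subtlety worth noting is the $+1$ coming from the inclusion of $0$ alongside the genuine $k$-th power residues in $\FF_p^\times$.
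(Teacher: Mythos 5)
Your proof is correct and follows essentially the same route as the paper: count the $k$-th powers modulo $p$ as a subgroup of $\FF_p^\times$ of index $(k,p-1)$ plus the zero class, then bound the image of the form by the $s$-fold product of that count. The paper's version is just terser; your extra care with the coefficients $a_i$ and the sumset containment fills in details the paper leaves implicit.
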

\begin{proof}
By the structure theorem for cyclic groups, the set non-zero $k^{\text{th}}$ powers modulo $p$ forms a subgroup of the unit group of size $\frac{p-1}{(k,p-1)}$. Adding in the $0$ class modulo $p$, there are $\frac{p-1}{(k,p-1)}+1$ values of $z^k$ modulo $p$. Thus, the proportion of admissible residue classes modulo $p$ is at most $\alpha_{k,p}$. 
\end{proof}

Using these two lemmas, we can establish Proposition \ref{MainProp}.  

\begin{proof}[Proof of Proposition \ref{MainProp}]
Suppose $p$ is a prime satisfying $p \equiv 1 \pmod{k}$. With $\alpha_{k,p}$ defined as in Lemma \ref{lem:Local_Density}, observe that $\alpha_{k,p} < 1$ if and only if
\[
\frac{p-1}{k} + 1 < p^{1/s}.
\]
Since $\frac{p-1}{k} + 1 < \frac{p}{k} + \frac{p}{k} = \frac{2p}{k}$, we have that $\alpha_{k,p} < 1$ whenever
\[
p < \tfrac{1}{4}k^{1/(1-1/s)} = \tfrac{1}{4}k^{1 + 1/(s-1)}. 
\]
Let $R \geq 10$ be sufficiently large and set $Z = \tfrac{1}{R}k^{1 + 1/(s-1)}$. Thus, by Lemmas \ref{lem:CRT} and \ref{lem:Local_Density},
\begin{align*}
\log(1/\delta_k) \geq \sum_p \log(1/\delta_k(p)) 
& \geq \sum_{\substack{p \equiv 1 \pmod{k} \\ p < Z} }\lr{\log p - s \log\lr{ \frac{p-1}{k} + 1} }\\
& \geq \sum_{\substack{p \equiv 1 \pmod{k} \\ p < Z} }\lr{s \log k - (s-1) \log p + O(1)}.
\end{align*}
Since $p < Z$, the above is
\[
\geq \sum_{\substack{p \equiv 1 \pmod{k} \\ p < Z} }\lr{(s-1) \log R + O(1)} \gg \sum_{\substack{p \equiv 1 \pmod{k} \\ p < Z}} 1,
\]
after fixing $R$ to be sufficiently large, depending at most on $s$. 
Then, using assumption \eqref{eqn:PrimesinAPs_conjecture} to bound the sum on the right in the above inequality, it follows that \
\[\log(1/\delta_k)\gg \frac{k^{1+1/(s-1)}}{\phi(k)\log k} \gg \frac{k^{1/(s-1)}}{\log k},\] after bounding $\phi(k)$ trivially by $k$. This proves the proposition. 
\end{proof}

\section{Global density is small on average}

This section is dedicated to proving Theorem \ref{MainTheorem} for which we require one additional lemma. For integers $a$ and $q$, let 
\begin{equation}
\psi(X; q,a) = \sum_{\substack{n < X \\ n \equiv a \pmod{q}}} \Lambda(n),
\label{def:Psi}
\end{equation}
where $\Lambda(n)$ equals $\log p$ if $n$ is a power of a prime $p$ and equals $0$ otherwise. 

\begin{Lemma} \label{lem:MainTheoremPrep}
Let $X \geq 1$ be arbitrary. For $Y \leq X^{1/2}$,  
\begin{equation}
\sum_{1 \leq m < Y} \psi(mX; m, 1) 
	 \geq \frac{\zeta(2) \zeta(3) \log 2}{\zeta(6)} XY + O(X \log^2 X + XY (\log X)^{-2}).
	 \label{eqn:Lem3-1}
\end{equation}
Additionally, if $s \geq 2$ is fixed then
\begin{equation}
\sum_{1 \leq m < Y} \int_{(m+1)^s}^{mX} \dfrac{\psi(t; m,1)}{t (\log t)^2 } dt 
	\ll \frac{XY}{\log^2 X}.
	\label{eq:Lem3-2}
\end{equation}
\end{Lemma}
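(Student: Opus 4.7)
The plan for \eqref{eqn:Lem3-1} is to separate out the expected main term $mX/\phi(m)$ and control the errors via Bombieri--Vinogradov. I would write
\[
\sum_{m<Y}\psi(mX;m,1) \;=\; X\sum_{m<Y}\frac{m}{\phi(m)} \;+\; \sum_{m<Y}\Big(\psi(mX;m,1)-\frac{mX}{\phi(m)}\Big).
\]
Since $Y\leq X^{1/2}$, the modulus range $m<Y$ sits comfortably inside the Bombieri--Vinogradov range for length $N=XY$, and the uniform-in-$y$ form of the theorem gives, for any $A>0$,
\[
\sum_{m\leq Y}\max_{y\leq XY}\Big|\psi(y;m,1)-\frac{y}{\phi(m)}\Big| \;\ll_{A}\; \frac{XY}{(\log X)^{A}},
\]
so taking $A=2$ already matches the error $XY(\log X)^{-2}$ in \eqref{eqn:Lem3-1}. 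For the main term I would use the classical evaluation $\sum_{m\leq Y}m/\phi(m) = \frac{\zeta(2)\zeta(3)}{\zeta(6)}Y + O(\log Y)$, which follows from the convolution identity $n/\phi(n)=\sum_{d\mid n}\mu(d)^{2}/\phi(d)$ and swapping the order of summation. Combining the pieces yields
\[
\sum_{m<Y}\psi(mX;m,1) \;=\; \frac{\zeta(2)\zeta(3)}{\zeta(6)}XY \;+\; O\!\left(X\log X \;+\; \frac{XY}{(\log X)^{2}}\right),
\]
and since $\zeta(2)\zeta(3)/\zeta(6) > (\log 2)\zeta(2)\zeta(3)/\zeta(6)$ and $X\log X \leq X\log^{2}X$, this immediately implies \eqref{eqn:Lem3-1}.

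For \eqref{eq:Lem3-2} the plan is to bound the integrand via Brun--Titchmarsh. When $t\geq(m+1)^{s}$ with $s\geq 2$, one has $\log(t/m)\geq\frac{s-1}{s}\log t$, so Brun--Titchmarsh gives $\pi(t;m,1)\ll_{s} t/(\phi(m)\log t)$ and hence $\psi(t;m,1)\ll_{s} t/\phi(m)$ (the prime-power contribution is $O(\sqrt t\,)$, which is absorbed in the end using $Y\leq X^{1/2}$). Substituting and using the standard tail estimate $\int_{a}^{b}dt/(\log t)^{2}\ll b/(\log b)^{2}$ (one integration by parts) one obtains
\[
\int_{(m+1)^{s}}^{mX}\frac{\psi(t;m,1)}{t(\log t)^{2}}\,dt \;\ll_{s}\; \frac{1}{\phi(m)}\cdot\frac{mX}{(\log X)^{2}},
\]
and summing over $m<Y$ against $\sum_{m<Y}m/\phi(m)\ll Y$ produces the desired bound $\ll XY/(\log X)^{2}$.

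The only genuine subtlety is the Bombieri--Vinogradov step of part \eqref{eqn:Lem3-1}: the upper integration endpoint $y=mX$ of each inner $\psi$-sum depends on $m$, so one cannot apply the theorem at a fixed length. The remedy is to pass to the uniform-in-$y$ form (maximum over $y\leq XY$); this replaces the nominal length $mX\leq X^{3/2}$ by $XY\leq X^{3/2}$, a change that costs only a bounded factor in logarithmic terms and is absorbed into the $(\log X)^{-A}$ saving. Once this is arranged, both parts of the lemma follow from the standard toolkit (Bombieri--Vinogradov, Brun--Titchmarsh, and the Dirichlet-series asymptotic for $m/\phi(m)$).
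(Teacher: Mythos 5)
Your proof is correct, and for \eqref{eq:Lem3-2} it is essentially the paper's argument (Brun--Titchmarsh plus $\log(t/m)\gg_s\log t$ for $t\ge(m+1)^s$, then $\int dt/(\log t)^2\ll mX/(\log mX)^2$ and $\sum_{m<Y}m/\phi(m)\ll Y$; the paper merely organizes the final summation dyadically). For \eqref{eqn:Lem3-1} you take a genuinely different decomposition. The paper splits $1\le m<Y$ into dyadic blocks $M\le m<2M$, uses monotonicity $\psi(mX;m,1)\ge\psi(MX;m,1)$ to freeze the length at $MX$ within each block, applies Bombieri--Vinogradov at that fixed length (noting $M\le(MX)^{1/3}$), and evaluates the resulting main term via Landau's asymptotic for $\sum_{M\le m<2M}1/\phi(m)$ --- this is precisely where the factor $\log 2$ in the stated constant comes from, and it is why the lemma is phrased as a lower bound. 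You instead invoke the $\max_{y\le x}$ form of Bombieri--Vinogradov at the single scale $x=XY$ (legitimate, since $Y\le X^{1/2}$ puts $Q=Y$ well inside the range $Q\le x^{1/2}(\log x)^{-B}$, and the cited \cite[Theorem 17.1]{iwanieckowalski} does carry the maximum over $y$), which handles the $m$-dependent endpoints $y=mX$ in one stroke, and you extract the main term from $\sum_{m\le Y}m/\phi(m)=\frac{\zeta(2)\zeta(3)}{\zeta(6)}Y+O(\log Y)$. Your route yields the cleaner asymptotic with the full constant $\zeta(2)\zeta(3)/\zeta(6)$, strictly stronger than \eqref{eqn:Lem3-1}; the paper's route avoids any appeal to uniformity in $y$ at the cost of the factor $\log 2$, which is harmless for the application. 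Both are complete; your treatment of the prime-power contribution $O(\sqrt t)$ in part two and of the error $O(X\log Y)\le O(X\log^2X)$ in part one is also fine.
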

\begin{proof}
We divide the sum in \eqref{eqn:Lem3-1} dyadically. For $1 \leq M \leq 2M \leq X^{1/2}$, note $M \leq (MX)^{1/3}$. Hence, by the Bombieri-Vinogradov theorem \cite[Theorem 17.1]{iwanieckowalski}, we have  that
\begin{align*}
\sum_{M \leq m < 2M} \psi(mX; m, 1) 
	& \geq \sum_{M \leq m < 2M} \psi(MX; m, 1) \\
	& = \Big( \sum_{M \leq m < 2M} \frac{MX}{\varphi(m)} \Big) + O( MX (\log MX)^{-2} ) \\
	& = \frac{\zeta(2) \zeta(3) \log 2}{\zeta(6)} MX + O(X \log M + MX (\log MX)^{-2} ).
\end{align*}
In the last step, we applied the classical fact  \cite{landau1900ueber} that, for $x \geq 2$, 
\[
\sum_{n \leq x} \frac{1}{\varphi(n)} = \frac{\zeta(2)\zeta(3)}{\zeta(6)} \Big( \log x + \gamma - \sum_p \frac{\log p}{p^2-p+1} \Big) + O(x^{-1} \log x). 
\]
Summing the prior estimate over $M = Y/2^{j+1}$ with $0 \leq j \leq \lfloor \log Y \rfloor$ and recalling $Y \leq X^{1/2}$ yields desired result. To prove \eqref{eq:Lem3-2}, we proceed similarly. For $t \geq (m+1)^s$ and $s \geq 2$, we may apply the Brun-Titchmarsh inequality \cite[Theorem 6.6]{iwanieckowalski} to $\psi(t;m,1)$ and deduce that
\begin{align*}
\sum_{M \leq m < 2M} \int_{(m+1)^s}^{mX} \dfrac{\psi(t; m,1)}{t (\log t)^2 } dt
& \ll \sum_{M \leq m < 2M} \int_{(m+1)^s}^{mX} \dfrac{1}{\varphi(m) (\log t)^2 } dt\\
& \ll \sum_{M \leq m < 2M} \frac{1}{\varphi(m)} \int_{(M+1)^s}^{2MX} \dfrac{1}{(\log t)^2 } dt\\
	& \ll  \sum_{M \leq m < 2M} \frac{MX}{\varphi(m) \log^2(MX)} \\ 
    & \ll \frac{MX}{\log^2 X}.
\end{align*}
The desired bound follows by dyadically summing this estimate. 
\end{proof}

\begin{proof}[Proof of Theorem \ref{MainTheorem}] 
By Lemma \ref{lem:CRT}, observe that
\[
\sum_{1 \leq k < X} \log(1/\delta_k) \geq S_1\] where 
\[S_1=\sum_{1 \leq k < X} \sum_p \log(1/\delta_k(p)).\]
It suffices to show $S_1 \gg X^{1+1/(s-1)}/\log X$. By Lemma \ref{lem:Local_Density}, 
\[
\delta_k(p) \leq \min\{\alpha_{k,p}, 1\}
\]
for each prime $p$. Let $m = \frac{p-1}{(k,p-1)}$, so $k = \frac{p-1}{m} \cdot d$ for some integer $d \geq 1$ with $(d,p-1) = 1$. Thus, each pair $(k,p)$ in the above sum corresponds to a unique triple $(m,d,p)$ of positive integers $m$ and $d$ such that $m \mid p-1$ and $(d,p-1) = 1$. Moreover,
\[
\alpha_{k,p} < 1 \iff p > (m+1)^s.
\]
Collecting these observations, we deduce that
\begin{equation*}
\begin{aligned}
	S_1 & \geq \sum_{m \geq 1} \sum_{\substack{m \mid p-1 \\ p > (m+1)^s} } \sum_{\substack{1 \leq d \leq \frac{mX}{p-1} \\ (d,p-1) = 1}}  \big(\log p - s \log(m+1) \big).  
\end{aligned}
\end{equation*}
Whenever $p < mX$, the inner sum over $d$ contains $d = 1$. Thus, the above is 
\begin{equation*}
\begin{aligned}
	& \geq \sum_{m \geq 1} \sum_{\substack{p \equiv 1 \pmod{m} \\ (m+1)^s < p < mX} } \big(\log p - s \log(m+1) \big).  
\end{aligned}
\end{equation*}
By positivity, we may restrict the outer sum to $1 \leq m < Y$ for some parameter $Y \geq 1$  satisfying
\begin{equation}
(Y+1)^s < XY.
\label{eqn:Y-condition}
\end{equation}
Recalling \eqref{def:Psi}, it follows by partial summation that 
\begin{equation}
\label{eqn:S_1-reduced}
\begin{aligned}
	S_1 & \geq \sum_{1 \leq m < Y} \int_{(m+1)^s}^{mX} \Big( 1 - \frac{s \log(m+1)} {\log t}\Big) d\psi(t; m,1) \\
    & \geq  \sum_{1 \leq m < Y}  \Big( \psi(mX; m, 1) \big(1-\frac{s \log(m+1)}{\log(mX)}\big) - s \log(m+1) \int_{(m+1)^s}^{mX} \dfrac{\psi(t; m,1)}{t (\log t)^2 } dt \Big). 
\end{aligned}
\end{equation}
Set $Y = X^{1/(s-1+\eta)}$ where $\eta = \eta(X) < 1/2$ is a parameter which will be specified. For $1 \leq m < Y$, we have that
\begin{equation}
1- \frac{s\log(m+1)}{\log(mX)}  = 1 - \frac{s}{1+\frac{\log X}{\log m}} + O(\frac{1}{m \log X}).
\label{eqn:Weight}
\end{equation}
If $X^{1/(2s-1)} \leq m \leq X^{1/(s-1+\eta)}$, then from \eqref{eqn:Weight} we see that
\[
1- \frac{s\log(m+1)}{\log(mX)} \geq \frac{\eta}{s} + O(X^{-1/(2s-1)}) \geq \frac{\eta}{s} + O(\frac{1}{\log X}). 
\]
Otherwise, for $m \leq X^{1/(2s-1)}$, we similarly have that
\[
1- \frac{s\log(m+1)}{\log(mX)} \geq \frac{1}{2} + O(\frac{1}{\log X}) \geq \frac{\eta}{s} + O(\frac{1}{\log X}).
\]
Substituting these bounds into \eqref{eqn:S_1-reduced} and noting $s \log(m+1) < s \log(Y+1) \ll \log X$, we deduce that
\[
S_1 \geq \lr{\sum_{1 \leq m < Y} \psi(mX; m, 1)}\lr{\frac{\eta}{s} + O\lr{\frac{1}{\log X}}} + O\lr{\log X \sum_{1 \leq m < Y} \int_{(m+1)^s}^{mX} \dfrac{\psi(t; m,1)}{t (\log t)^2 } dt}.
\]
Since $Y \leq X^{1/2}$ for $s \geq 3$, Lemma \ref{lem:MainTheoremPrep} therefore implies that
\[
S_1 \geq \frac{\zeta(2)\zeta(3)\log 2}{\zeta(6)} \cdot \frac{\eta}{s} \cdot XY + O\lr{\frac{XY}{\log X}}.
\]
Note the implied constant is independent of $\eta$ and depends only on $s$. Choose $\eta = C/\log X$ where $C$ is a fixed sufficiently large constant depending only on $s$. Thus, $Y = X^{1/(s-1+\eta)} \gg X^{1/(s-1)}$ satisfies \eqref{eqn:Y-condition} and it follows that $S_1 \gg XY/\log X \gg X^{1+1/(s-1)}/\log X$. 
\end{proof}

\bibliographystyle{plain}
\bibliography{density}

\end{document}